  \newenvironment{bsmallmatrix}
  {\left[\begin{smallmatrix}}
  {\end{smallmatrix}\right]}
\begin{document}
\title{Novel pathways in $k$-contact geometry}
%
%
\author{Tomasz Sobczak\inst{1}\orcidID{0009-0002-9577-0456}, Tymon Frelik\orcidID{0009-0008-2996-4571}}

%
\authorrunning{Tomasz Sobczak, Tymon Frelik}
%
\institute{Department of Mathematical Methods in Physics,\\ University of Warsaw, Warsaw, Poland}

\maketitle 
\begin{abstract}
Our study of Goursat distributions originates new types of $k$-contact distributions and Lie systems with applications. In particular, families of generators for Goursat distributions on $\mathbb{R}^4, \mathbb{R}^5$ and $\mathbb{R}^6$ give rise to Lie systems and we characterise Goursat structures that are $k$-contact distributions. Our results are used to study the zero-trailer and other systems via Lie systems and $k$-contact manifolds. New ideas for the development of superposition rules via geometric structures and the characterisation of $k$-contact distributions are given and applied. Some relations of $k$-contact geometry with parabolic Cartan geometries are inspected. 
\keywords{Goursat distribution \and $k$-contact geometry \and Lie system \and superposition rule \and trailer systems.}
\end{abstract}
\section{Introduction}

Recently, $k$-contact geometry appeared as a generalisation of contact geometry to analyse field theories \cite{GGMRR_20,GGMRR_21}. Recent research \cite{LRS_24} proposed that $k$-contact forms, a central element in $k$-contact geometry, are more commonly applied in practice than being an essential characteristic: $k$-contact distributions are more important. Applications to Goursat distributions appeared in \cite{LRS_24}. 

This paper studies Goursat distributions on manifolds of dimension four to six that are $k$-contact distributions. A technique to characterise $k$-contact distributions is provided, which is better than previous methods \cite{LRS_24}, and applied in our characterisation. We show that not every Goursat distribution is a $k$-contact distribution. Nevertheless, all classes on $\mathbb{R}^4$, $\mathbb{R}^5$, and $\mathbb{R}^6$ admit bases of generating vector fields included in a finite-dimensional Lie algebra of vector fields: a Vessiot--Guldberg (VG) Lie algebra \cite{CL_11}. 

{\it Lie systems} are $t$-dependent  systems of ordinary differential equations whose general solution can be written as a $t$-independent function, called {\it superposition rule}, of a generic family of particular solutions and a set of constants related to initial conditions. The Lie--Scheffers theorem states that every Lie system is associated with a $t$-dependent vector field given by a linear combination with $t$-dependent coefficients of vector fields spanning a finite-dimensional Lie algebra: a  {\it Vessiot--Guldberg (VG) Lie algebra}. Although Lie systems are quite restrictive among differential equations, they admit relevant applications in physics and  mathematical properties \cite{CGM_00,CL_11,LS_20}.
Lie systems on the real line and the plane were fully classified by Lie \cite{LS_20}. They are quite well understood, and their properties and superposition rules have been studied in most cases \cite{BBHLS_15}. Lie systems on manifolds of dimension larger than two are scarcely known, apart from particular cases \cite{CL_11,LS_20,Ram_02}.

Overall, this paper serves as a brief summary of emerging research avenues in $k$-contact geometry, which will be explored in future studies.
 We have shown that Goursat distributions on $\mathbb{R}^4$, $\mathbb{R}^5$, and $\mathbb{R}^6$ define new examples of locally automorphic Lie systems. This was unexpected and relevant, as they provide Lie systems on manifolds of dimension larger than two with new applications to trailer systems. Previous Lie systems  in control theory are recovered \cite{Ram_02}. Table \ref{tab:Lie_systems} summarises our classification results on Goursat distributions with their associated Lie systems and $k$-contact  distributions (if available). 
Finally, the relation of $k$-contact distributions and certain results on Cartan geometry is studied. We inspect three examples of distributions on $\mathbb{R}^5$, $\mathbb{R}^6$, and $\mathbb{R}^7$, which define certain flat parabolic Cartan geometries and show that they are $k$-contact. From the example of $(2,3,5)$ distributions we see that flatness of a Cartan geometry may not be the necessary condition for the corresponding distribution to be $k$-contact, which motivates further research.

\subsection{$k$-contact geometry}
Let us briefly introduce $k$-contact geometry (see \cite{GGMRR_20,LRS_24} for details).  Hereafter, $M$ is assumed to be a connected manifold and structures are smooth unless otherwise stated. Moreover, $\mathfrak{X}(M)$ is the space of vector fields on $M$.


    $k$-Contact distributions, defined in Definition \ref{Def:LocalEqu}, are a recently proposed class of distributions which generalizes $k$-contact geometry. Recall that a regular distribution $\mathcal{D}$ is {\it maximally non-integrable} if $\rho:\mathcal{D}\times \mathcal{D}\rightarrow TM/\mathcal{D}$ defined as $\rho(X,Y)=\pi\left([X,Y]\right)$ for every $X,Y\in \mathcal{D}$ and the natural vector bundle projection $\pi:\text{T}M \rightarrow \text{T}M/\mathcal{D}$, is non-degenerate. A {\it Lie symmetry of $\mathcal{D}$} is a vector field on $M$ whose Lie brackets with vector fields taking values in $\mathcal{D}$ give vector fields taking values in $\mathcal{D}$. Finally, an  {\it integrable $k$-vector field} on $M$ is a family of $k$ commuting vector fields on $M$.  
    \begin{definition}\label{Def:LocalEqu} 
    A $k$-contact distribution $\mathcal{D}$ on $M$ is a maximally non-integrable distribution  if for every open subset $U \subset M$ it admits an integrable $k$-vector field ${\bf S} = (S_1,\ldots, S_k)$ of Lie symmetries of $\mathcal{D}|_U$, such that
    \begin{equation*}\label{eq:Dec}
        \langle S_1,\ldots,S_k\rangle \oplus \mathcal{D}|_U = \text{T} \,U\,.
    \end{equation*}
    If $S_1,\ldots,S_k$ are globally defined, they are called {\it Reeb vector fields}.
\end{definition} 
Given $ X_1,X_2\in \mathfrak{X}(M)$ with $X_1\wedge X_2\neq 0$, we define ${\rm ad}^{k+1}_{X_2}X_1=[X_2,{\rm ad}^kX_1]$ for $k\in \mathbb{N}$ and the distributions
$$
\mathcal{D}_{X_1,X_2}^k=\langle X_1,X_2,{\rm ad}_{X_2}X_1,\ldots,{\rm ad}_{X_2}^kX_1\rangle.
$$
These distributions are not regular in general but, under quite practical conditions, are invariant relative to the Lie symmetries of $\mathcal{D}=\langle X_1,X_2\rangle$. This will be used here to state when $\mathcal{D}$ is not a $k$-contact distribution.

\subsection{Lie systems}
Let us describe some notions related to Lie systems \cite{CL_11,LS_20}. A \textit{t-dependent vector field} is a map $X : \mathbb{R} \times M \ni (t,x) \mapsto X(t,x) \in \text{T} M$ such that $\pi \circ X= \pi_{2}$ for projections $\pi_2:(t,x)\in \mathbb{R}\times M\mapsto x\in M$ and $\pi \colon \text{T}M \rightarrow M $.
Then, each $t$-dependent vector field $X$ on $M$ is equivalent to a family $\{X_{t}\}_{t \in \mathbb{R}}\subset \mathfrak{X}(M)$.  

 The \textit{system associated} with $X$ is the system of first-order ordinary differential equations
\begin{equation}
\label{eq:integral_curve}
    \tfrac{\text{d} x}{\text{d} t} = X(t,x), \qquad x\in M,\qquad t\in \mathbb{R},
\end{equation}
whose solution $x : \mathbb{R} \rightarrow M$ with $x(0)=x_0$ is called the \textit{integral curve} of $X$ with initial condition $x_0$ at $t=0$. Every $t$-dependent vector field has an associated system of ODEs \eqref{eq:integral_curve} and vice versa. 

    A system $X$ on $M$ admits a \textit{superposition rule} if there exists a $t$-independent map $\Phi : M^{\ell} \times M \rightarrow M$ of the form $\label{eq7}
    x=\Phi(x_{(1)},\ldots,x_{(\ell)};k)\,,
$ such that every generic solution of $X$ can be retrieved as 
$ x(t)=\Phi(x_{(1)}(t),\ldots,x_{(\ell)}(t);k)\,,
$ where $x_{(1)}(t),\ldots,x_{(\ell)}(t)$ is any generic family of particular solutions of   \eqref{eq:integral_curve} and $k \in M$.
 In general, superposition rules are defined only on a open subset of $M^\ell\times M$, but we will hereafter skip this technical detail. Moreover, the last copy of $M$ in $M^\ell\times M$ is aimed to parametrise the generated solutions, which are related to the initial conditions of \eqref{eq:integral_curve}.
 
 Lie systems are characterised as follows \cite{Lie1}.

\begin{theorem}[Lie theorem]
A system $X$ on $M$  admits a superposition rule if and only if $
X(t,x)=\sum_{\alpha=1}^rb_{\alpha}(t)X_{\alpha}(x), 
$    
with $X_{1},\ldots,X_{r}$ spanning an $r$-dimensional VG Lie algebra $V$ on $M$.
\end{theorem}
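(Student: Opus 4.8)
The plan is to prove the two implications separately. The direction ``presentation as $\sum_{\alpha}b_{\alpha}(t)X_{\alpha}$ with a VG Lie algebra $\Rightarrow$ superposition rule'' I would establish constructively, since this is the form exploited in practice; the converse I would argue along the lines of Lie's original analysis, via common first integrals of a diagonal prolongation. Throughout, set $n=\dim M$.

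For the constructive direction, assume $X(t,x)=\sum_{\alpha=1}^{r}b_{\alpha}(t)X_{\alpha}(x)$ with $[X_{\alpha},X_{\beta}]=\sum_{\gamma}c_{\alpha\beta}^{\gamma}X_{\gamma}$ and $V=\langle X_{1},\dots,X_{r}\rangle$ an $r$-dimensional VG Lie algebra. I would pass to the diagonal prolongations $\widetilde X_{\alpha}$ of the $X_{\alpha}$ to $M^{m+1}$, with coordinates $(x_{(0)},x_{(1)},\dots,x_{(m)})$; since they again obey $[\widetilde X_{\alpha},\widetilde X_{\beta}]=\sum_{\gamma}c_{\alpha\beta}^{\gamma}\widetilde X_{\gamma}$, the distribution $\mathcal D=\langle\widetilde X_{1},\dots,\widetilde X_{r}\rangle$ is involutive. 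Its generic rank is nondecreasing in $m$ and bounded by $r$, hence stabilises; choosing $m$ large enough that, in addition, $mn\ge r$, and working over an open set on which $V$ acts transitively, I would invoke the Frobenius theorem to select functionally independent common first integrals $F_{1},\dots,F_{n}$ of $\mathcal D$ with non-singular Jacobian with respect to $x_{(0)}$ (a short rank computation shows this is possible), and then solve $F(x_{(0)},x_{(1)},\dots,x_{(m)})=k$ for $x_{(0)}=\Phi(x_{(1)},\dots,x_{(m)};k)$ by the implicit function theorem. Finally I would verify the superposition property: as $\widetilde X_{t}:=\sum_{\alpha}b_{\alpha}(t)\widetilde X_{\alpha}$ is a section of $\mathcal D$, every $F_{i}$ is a first integral of $\widetilde X_{t}$, while the integral curves of $\widetilde X_{t}$ are exactly the tuples whose components all solve $X$; hence if $x_{(1)}(t),\dots,x_{(m)}(t)$ solve $X$ and $x_{(0)}(t)$ is the solution with $x_{(0)}(0)=\Phi(x_{(1)}(0),\dots,x_{(m)}(0);k)$, then $F(x_{(0)}(t),\dots,x_{(m)}(t))\equiv k$, i.e. $x_{(0)}(t)=\Phi(x_{(1)}(t),\dots,x_{(m)}(t);k)$ for all $t$, so $\Phi$ is a superposition rule.

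For the converse, let $\Phi\colon M^{\ell}\times M\to M$, $x_{(0)}=\Phi(x_{(1)},\dots,x_{(\ell)};k)$, be a superposition rule. Inverting it in $k$ gives locally a submersion $\Upsilon\colon M^{\ell+1}\to M$ with $k=\Upsilon(x_{(0)},x_{(1)},\dots,x_{(\ell)})$. Given solutions $x_{(1)}(t),\dots,x_{(\ell)}(t)$ of $X$ and a solution $x_{(0)}(t)$, uniqueness of integral curves of $X_{t}$ together with the superposition property forces $\Upsilon(x_{(0)}(t),x_{(1)}(t),\dots,x_{(\ell)}(t))$ to be constant; hence the components of $\Upsilon$ are common first integrals of the diagonal prolongation $\widetilde X_{t}$, $t\in\mathbb R$, on $M^{\ell+1}$, and therefore every $\widetilde X_{t}$ and every iterated Lie bracket of the $\widetilde X_{t}$ is tangent to the foliation $\mathcal F$ by the fibres of $\Upsilon$, whose leaves have dimension $\ell n$. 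Because the diagonal prolongation $\mathfrak X(M)\to\mathfrak X(M^{\ell+1})$ is an injective Lie-algebra homomorphism carrying the smallest Lie algebra $V^{X}$ containing $\{X_{t}\}_{t\in\mathbb R}$ onto the smallest Lie algebra $V^{\widetilde X}$ containing $\{\widetilde X_{t}\}_{t\in\mathbb R}$, it suffices to bound $\dim V^{\widetilde X}$. Exploiting the interplay between $\mathcal F$, the transversal coordinate $k$, and the fact that the elements of $V^{\widetilde X}$ are diagonal prolongations, one aims for the bound $\dim V^{X}=\dim V^{\widetilde X}\le\ell n<\infty$; picking a basis $X_{1},\dots,X_{r}$ of $V^{X}$ and using $X_{t}\in V^{X}$ for every $t$ then yields $X(t,x)=\sum_{\alpha=1}^{r}b_{\alpha}(t)X_{\alpha}(x)$ with $\langle X_{1},\dots,X_{r}\rangle$ a VG Lie algebra.

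The main obstacle is precisely this last dimension count. Finiteness of the rank of the distribution generated by $\{\widetilde X_{t}\}$ and their brackets does \emph{not} by itself force the Lie algebra they generate to be finite dimensional --- $\mathfrak X(\mathbb R)$ has rank one yet is infinite dimensional --- so one must genuinely use that the relevant vector fields are diagonal prolongations and that $\Upsilon$ together with a projection trivialises $M^{\ell+1}$ up to a finite-dimensional leafwise remainder. Turning this heuristic into the clean bound $r\le\ell\dim M$ is the part that requires real care, in contrast with the essentially routine Frobenius construction underpinning the first implication; everything else (prolongation identities, involutivity, the implicit function theorem step, and the first-integral bookkeeping) is standard.
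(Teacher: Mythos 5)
The paper offers no proof of this statement: it is the classical Lie--Scheffers theorem, quoted with a citation, so your attempt can only be judged on its own merits. Your forward implication follows the standard geometric construction (diagonal prolongation to $M^{m+1}$, involutivity of $\langle\widetilde X_1,\dots,\widetilde X_r\rangle$, Frobenius, $n$ common first integrals with invertible Jacobian in $x_{(0)}$, implicit function theorem) and is sound in outline; the only point to tighten is that $m$ must be chosen so that the prolongations are pointwise linearly independent on a dense open subset of $M^m$ (not merely $mn\ge r$), which is what guarantees the codimension needed for the $n$ first integrals.

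The converse, however, contains a genuine gap that you yourself flag: you never establish $\dim V^X<\infty$, and without it the argument does not close. The missing idea is not a delicate estimate but a short injectivity argument exploiting that each leaf of the foliation cut out by $\Upsilon$ is the \emph{graph} of $x_{(0)}=\Phi(x_{(1)},\dots,x_{(\ell)};k)$ over $M^{\ell}$. Fix a generic $(x_{(1)},\dots,x_{(\ell)})$ and suppose $Y\in V^X$ satisfies $Y(x_{(1)})=\dots=Y(x_{(\ell)})=0$. Since $\widetilde Y$ is tangent to every leaf and each leaf is a graph, the $x_{(0)}$-component of $\widetilde Y$ at a point of the leaf $L_k$ is $T\Phi_k\bigl(Y(x_{(1)}),\dots,Y(x_{(\ell)})\bigr)=0$, i.e.\ $Y\bigl(\Phi(x_{(1)},\dots,x_{(\ell)};k)\bigr)=0$ for every $k$; letting $k$ vary sweeps $x_{(0)}$ over an open subset of $M$, and running over generic base points forces $Y=0$. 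Hence the evaluation map $V^X\ni Y\mapsto\bigl(Y(x_{(1)}),\dots,Y(x_{(\ell)})\bigr)\in T_{x_{(1)}}M\times\dots\times T_{x_{(\ell)}}M$ is linear and injective, giving $r=\dim V^X\le\ell\dim M$ at once. As you correctly observe, tangency to a finite-rank involutive distribution alone can never yield this bound, so some version of this graph/evaluation argument is indispensable; until it is supplied, only one implication of the theorem is actually proved.
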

We will consider \textit{locally automorphic Lie systems}, namely a Lie system $X$ on $M$ such that $\dim V=\dim M$ and $V$ spans $TM$ (see \cite{Gracia2019} for details). This allows us to determine a local diffeomorphism mapping $V$ onto  the Lie algebra of right-invariant vector fields on a Lie group $G$. This relation allows one to derive their superposition rules and other properties in a simple manner.

\section{Lie systems, $k$-contact geometry, and Goursat geometry}

For the sake of clarity, we provide the definition of the Goursat distribution \cite{PR_01}. Recall that the \textit{derived flag} of distribution $\mathcal{D}$ on an $m$-dimensional manifold $M$ is a sequence defined as $\mathcal{D}^{(0)}=\mathcal{D}$ and $\mathcal{D}^{(\ell+1)}=\mathcal{D}^{(\ell)}+[\mathcal{D}^{(\ell)},\mathcal{D}^{(\ell)}]$ for $\ell \geq0$, where $[\mathcal{D}^{(\ell)}, \mathcal{D}^{(\ell)}]$ denotes the distribution spanned by Lie brackets $[X,Y]$ of all vector fields $X, Y \in \mathcal{D}^{(\ell)}$. A \textit{Goursat structure} on $M$ with $m\geq3$ is a rank two distribution $\mathcal{D_{G}}$, such that, for every point $x \in M$, elements of its derived flag satisfy $\dim\mathcal{D_G}^{(\ell)}=\ell+2$ for $\ell=0,\ldots,m-2$.

Let us show that Goursat distributions on $ \mathbb{R}^4,\mathbb{R}^5$ and $\mathbb{R}^6$ 
 give rise to new examples of Lie systems. The following theorem follows from the classification of Goursat distributions on $ \mathbb{R}^4,\mathbb{R}^5$ and $\mathbb{R}^6$ by the Kumpera-Ruiz normal form (see \cite{KR82,PR_01} for details) and simple calculations that follow from the information in Table \ref{tab:Lie_systems}. 

\begin{landscape}
\begin{table}[]
    \centering
    \scalebox{0.78}{\begin{tabular}{|c|p{1.3cm}|p{6.9cm}|p{4.4cm}|p{7.5cm}|p{2.9cm}|}
      \hline
      Class& Manifold&VG Lie algebra & Structure constants& Reeb vector fields $R$/commuting Lie symmetries $S$ &$k$-contact\\
        \hline
      $1$&  \centering$\mathbb{R}^4$&$X_{1}=\partial_4, \newline X_{2}=x_4\partial_3+x_3 \partial_2+\partial_1, X_3=\partial_3, X_4=\partial_2$&$c_{123}=1, c_{234}=-1$&$R_1=\partial_2, \newline R_2=x_1\partial_2+\partial_3$&two-contact\\
      \hline
      $2$   &   \centering$\mathbb{R}^5$&$X_1=\partial_5,  X_2=x_5\partial_4+x_{4}\partial_3+x_{3}\partial_2+\partial_1, \newline X_{3}=\partial_{4}, \quad 
      X_{4}=\partial_{3},  \quad X_{5}=\partial_{2}$&$c_{123}=1,  
      \newline c_{234}=c_{245}=-1$&$R_1=\partial_2, R_2=x_1\partial_2+\partial_3,\newline R_3=\tfrac{x_1^2}{2}\partial_2+x_1\partial_3+\partial_4$&three-contact\\
      \hline
       $3$  &\centering$\mathbb{R}^5$&$X_{1}=\partial _5,  X_{2}=x_5(\partial _1+x_{3}\partial _2+x_4\partial _3)+\partial_4, \newline X_{3}=\partial_1+x_{3}\partial_2+x_{4}\partial_3, \quad X_{4}=\partial_{3}, \newline X_{5}= \partial_2, \quad X_{6}=x_{5}\partial_2$ 
       &$c_{123}=c_{165}=c_{234}=1, \newline 
       c_{246}=c_{345}=-1$&$\text{for}\; x_{5} \neq 0 \newline
       S_1=\partial_2,S_2=x_1\partial_2+\partial_3, S_3=\tfrac{x_1^2}{2}\partial_2+x_1\partial_3+\partial_4\newline
       \text{for}\; x_{5}=0 \newline
        \;S_{1}=\partial_{1}, \;S_{2}=\partial_{2}, \;S_{3}=\partial_{3}, \;
       $&three-contact\\
       \hline
       $4$ &\centering$\mathbb{R}^6$&$X_{1}=\partial_6,  
       X_{2}=x_{6}\partial_5+x_{5}\partial_4+x_{4}\partial_{3} +x_{3}\partial _{2}+\partial _{1},\newline   X_{3}=\partial_5,\;  X_{4}= \partial_4,\; X_{5}=\partial_{3},\; X_{6}=\partial_{2}$ 
       &$c_{123}=1, \newline c_{234}=c_{245}=c_{256}=-1$&$R_1=\partial_2, R_2=x_1\partial_2+\partial_3,\newline R_3=\tfrac{x_1^2}{2}\partial_2+x_1\partial_3+\partial_4, R_4=\tfrac{x_1^3}{6}\partial_2+\tfrac{x_1^2}{2}\partial_3+x_1\partial_4+\partial_5$&four-contact\\
       \hline
       $5$&\centering$\mathbb{R}^6$&$X_{1}=\partial _{6}, X_{2}=\partial _5+x_6(x_5\partial _{4}+x_{4}\partial _{3}+x_{3} \partial _{2}+\partial _{1}),
       \newline
       X_{3}=\partial_1+x_{3}\partial_2+x_{4}\partial_3+x_{5}\partial_4, X_{4}=\partial_4, \newline X_{5}=x_{6}\partial_3,X_6=\partial_3,X_7=(x_6)^2\partial_2,
       \newline
       X_{8}=x_{6}\partial _2,X_{9}=\partial_2$&$c_{123}=c_{189}=c_{234}=c_{156}=1, \newline
       c_{248}=c_{257}=c_{268}=c_{346}=c_{358}=c_{369}=-1, \newline 
       c_{178}=2$&$R_1=\partial_2,R_2=x_1\partial_2+\partial_3, \newline R_3=\tfrac{x_1^2}{2}\partial_2+x_1\partial_3+\partial_4,R_4=\tfrac{x_1^3}{6}\partial_2+\tfrac{x_1^2}{2}\partial_3+x_1\partial_4+\partial_5$&four-contact\\
       \hline
       6&\centering$\mathbb{R}^6$&$X_{1}=\partial _6, X_{2}=x_{6}\partial _{5}+\partial _{4}+x_{5}\left(x_{4}\partial _{3}+x_{3}\partial _{2}+\partial _{1}\right),
       \newline
       X_{3}=\partial_5, X_{4}=\partial_1+x_{3}\partial_2+x_{4}\partial_3, \newline X_{5}=\partial_3,X_{6}=x_{5}\partial_2, X_{7}=x_{6}\partial_2, X_{8}=\partial_2$&$c_{123}=c_{178}=c_{245}=c_{267}=c_{368}=1, \newline  c_{234}= c_{256}=c_{458}=-1$&&No $k$-contact\\
       \hline
       7& \centering$ \mathbb{R}^6$ & $X_{1}=\partial_{6},$ \newline $X_{2}=(x_{6}+1)\partial_{5}+\partial_{4}+x_{5}(x_4\partial_{3}+x_{3} \partial_{2}+ \partial_{1}),$\newline $X_3=\partial_5,X_4=\partial_1+x_3\partial_2+x_4\partial_3,\newline X_5=\partial_3,X_6=x_5\partial_2,X_7=\partial_2,\newline X_8=(1+x_6)\partial_2$ &$c_{123}=c_{187}=c_{245}=c_{268}=c_{367}=1, \newline c_{234}=c_{256}=c_{457}=-1$&
       
$S_1 = Y_1-18x_5D\,\partial_6\newline  S_2=Y_2+\left(12x_4 x_5^3 - 12x_1 x_5^2 - 9x_1^2 D + 36x_3 x_5 D\right)\partial_6\newline S_3=Y_3+\Big(12x_1^2(x_3 D - x_4 x_5^2) - 2x_1^3(3x_5 + 5x_4 D) - 6x_5\big(8x_3 x_4 x_5^2 + 12x_3^2 D + 9x_2 x_4 D\big) + 6x_1\big(3x_4^2 x_5^3 + 4x_2 D + x_3 x_5(8x_5 + 9x_4 D)\big)\Big)\partial_6\newline S_4=Y_4+\Big(9x_2^2 D + 48x_3^2 x_5(x_3 + x_4 x_5^2 + x_3 x_6)  - 6x_1 x_3 x_5(8x_3 x_5 + 9x_4^2 x_5^2 + 18x_3 x_4 D)  - 2x_1^3\big(3x_3 x_5 + 5x_3 x_4 D + 3x_4^2 x_5(x_4 + 2x_5 + x_4 x_6)\big)  + 3x_1^2\big(4x_4^3 x_5^3 + 7x_3^2 D + 2x_3 x_4 x_5(10x_5 + 9x_4 D)\big)  + 6x_2\big(3x_4^2 x_5^3 + 18x_3 x_4 x_5 D + x_1^2(3x_5 + 5x_4 D) - x_1(11x_3 D + 6x_4 x_5^2 + 9x_4^2 x_5 D)\big)\Big)\partial_6$

       & \parbox{2.85cm}{\centering four-contact\\(on a dense subset)} \\
       \hline
       $8$  &\centering $\mathbb{R}^6$& $X_{1}=\partial _{6},$ \newline  $X_{2}=\partial_{5}+x_{6}(\partial_{4}+x_{5}(x_{4}\partial_{3}+x_{3}\partial_{2}+\partial_{1})),$\newline
       $X_{3}=x_5\left(\partial_1+x_3\partial_2+x_4\partial_3\right)+\partial_4, \newline X_4=\partial_1+x_3\partial_2+x_4\partial_3,X_5=x_6\partial_3, \newline X_6=\partial_3, X_7=x_5(x_6)^2\partial_2,X_8=x_5x_6\partial_2, \newline X_9=x_5\partial_2,X_{10}=\partial_2,X_{11}=(x_6)^2\partial_2,\newline X_{12}=x_6\partial_2$&$c_{123}=c_{156}=c_{189}=c_{1(12)(10)}=c_{234}=c_{245}=c_{27(11)}=c_{28(12)}=c_{29(10)}=c_{346}=1, 
       \newline c_{257}=c_{268}=c_{358}=c_{369}=c_{45(12)}=c_{46(10)}=-1
       \newline
       c_{178}=c_{1(11)(12)}=2$&
$ S_1=Y_1+ 18\, x_5 x_6\partial_6\newline S_2=Y_2+3 x_6 \left( 3(x_1^2 - 4 x_3 x_5) + 4 E x_5 x_6 \right)\partial_6\newline S_3=Y_3+2 x_6 \Big[
-12 x_1 x_2 - 6 x_1^2 x_3 + 5 x_1^3 x_4 + 36 x_3^2 x_5 + 27 x_2 x_4 x_5 - 27 x_1 x_3 x_4 x_5 + 3 E (x_1^2 - 8 x_3 x_5 + 3 x_1 x_4 x_5) x_6
\Big]\partial_6\newline S_4=Y_4+3 x_6 \Big[
-9 x_2^2 + 66 x_1 x_2 x_3 - 21 x_1^2 x_3^2 - 30 x_1^2 x_2 x_4 + 10 x_1^3 x_3 x_4 - 48 x_3^3 x_5 - 108 x_2 x_3 x_4 x_5 + 108 x_1 x_3^2 x_4 x_5 + 54 x_1 x_2 x_4^2 x_5 - 54 x_1^2 x_3 x_4^2 x_5 + 6 x_1^3 x_4^3 x_5 + 6 E \big[
(8 x_3^2 + 3 x_2 x_4) x_5 - 3 x_1 (x_2 + 3 x_3 x_4 x_5) + x_1^2 (x_3 + 2 x_4^2 x_5)
\big] x_6
\Big]\partial_6$
       
       &\parbox{2.85cm}{\centering four-contact\\(on a dense subset)} \\
       \hline
       \end{tabular}}
        \caption{Finite-dimensional Lie algebras and Lie symmetries of Goursat distributions $\langle X_1,X_2\rangle$. We also comment which distributions are related to $k$-contact distributions. Goursat distributions on $\mathbb{R}^4$, $\mathbb{R}^5$, and $\mathbb{R}^6$ can be found in \cite{PR_01}. Existence of generating functions $A_{\mu1},\ldots,A_{\mu4}$ for $\mu=7,8$ giving rise to Reeb vector fields on $\mathbb{R}^6$ is still  an open problem. Note that  where $D=x_6+1$ 
 and  $E=x_5(x_1-x_5x_4)$.
 }
    \label{tab:Lie_systems}
\end{table}
\end{landscape}

\begin{theorem} Any Goursat distribution on \(\mathbb{R}^4\), \(\mathbb{R}^5\), or \(\mathbb{R}^6\) is locally equivalent to the Kumpera--Ruiz normal forms listed in Table~\ref{tab:Lie_systems}, which are spanned by the vector fields \(\langle X_1, X_2 \rangle\). These distributions are contained in VG Lie algebras and are related to \(k\)-contact distributions, as indicated in Table~\ref{tab:Lie_systems}.

\end{theorem}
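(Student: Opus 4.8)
The plan is to split the statement into three tasks: the local normal-form reduction, the embedding of each normal form into a finite-dimensional Lie algebra of vector fields, and the verification of the $k$-contact data recorded in the last two columns of Table~\ref{tab:Lie_systems}. The first task requires nothing new: I would invoke the Kumpera--Ruiz classification of Goursat structures \cite{KR82,PR_01}, which says that on an $m$-dimensional manifold with $m\in\{4,5,6\}$ every Goursat structure is locally diffeomorphic to one whose rank-two distribution is generated by $\partial_m$ together with a Kumpera--Ruiz vector field built iteratively from a string of signs; enumerating the admissible strings for $m=4,5,6$ yields exactly the eight local models of the table (one on $\mathbb{R}^4$, two on $\mathbb{R}^5$, five on $\mathbb{R}^6$), and after an affine change of coordinates the generators take the displayed form $\langle X_1,X_2\rangle$.

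For the second task, for each class I would take the vector fields $X_1,\dots,X_r$ in the third column and compute all brackets $[X_i,X_j]$, checking that they reproduce the structure constants of the fourth column under the convention $[X_i,X_j]=\sum_k c_{ijk}X_k$; antisymmetry and the Jacobi identity cut down the number of brackets that must actually be evaluated, and since all coefficients are low-degree polynomials these are finite mechanical computations. The outcome is that $\langle X_1,\dots,X_r\rangle$ is an $r$-dimensional real Lie algebra $V$ containing $\mathcal{D}=\langle X_1,X_2\rangle$, so the system generated by $X_1,X_2$ is a Lie system with Vessiot--Guldberg algebra $V$ by the Lie theorem; noting in addition that $V$ spans $T\mathbb{R}^m$ on a dense open set (with $\dim V=\dim M$ in the relevant classes) gives the announced locally automorphic Lie systems.

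For the third task I would first record that each $\mathcal{D}=\langle X_1,X_2\rangle$ is maximally non-integrable, which is immediate from the Goursat growth condition $\dim\mathcal{D}^{(\ell)}=\ell+2$: it forces the bilinear map $\rho$ of Definition~\ref{Def:LocalEqu} to be nondegenerate. For classes $1$--$5$, marked $k$-contact, I would then verify for the listed Reeb fields $R_1,\dots,R_k$ --- respectively the local symmetries $S_1,\dots,S_k$ in class $3$ --- the three defining properties: that they pairwise commute (an integrable $k$-vector field), that $[R_i,X_1]$ and $[R_i,X_2]$ take values in $\mathcal{D}$ (so the $R_i$ are Lie symmetries of $\mathcal{D}$), and that $\langle R_1,\dots,R_k\rangle\oplus\mathcal{D}=T\mathbb{R}^m$ pointwise --- all direct bracket and rank computations, the split into the cases $x_5\neq0$ and $x_5=0$ in class $3$ being needed because no single smooth family of symmetries survives across $\{x_5=0\}$. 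For classes $7$ and $8$ the same verification works only on the dense open set where $E=x_5(x_1-x_5x_4)$, respectively $D=x_6+1$, is nonzero, which is exactly the ``on a dense subset'' qualifier; here I would organise the computation by writing each $S_i$ as $Y_i$ plus a $\partial_6$-correction so that only the correction terms have to be carried through the brackets with $X_1$ and $X_2$.

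Finally, to show that the class-$6$ distribution is \emph{not} a $k$-contact distribution, I would exploit the distributions $\mathcal{D}_{X_1,X_2}^k$: under the practical conditions mentioned after Definition~\ref{Def:LocalEqu} these are invariant under every Lie symmetry of $\mathcal{D}$, so any integrable $k$-vector field of Lie symmetries complementing $\mathcal{D}$ would have to be compatible with the whole flag $\mathcal{D}\subset\mathcal{D}_{X_1,X_2}^1\subset\mathcal{D}_{X_1,X_2}^2\subset\cdots$; computing this flag explicitly for the class-$6$ generators and comparing ranks rules out the existence of such a complement, so $\mathcal{D}$ is not $k$-contact. I expect this non-existence argument, together with the bookkeeping needed for the large polynomial symmetries of classes $7$ and $8$, to be the main obstacle; the positive verifications for classes $1$--$5$ and the bracket computations of the second task are routine once the normal forms have been fixed.
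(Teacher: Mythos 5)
Your proposal follows essentially the same route as the paper: invoke the Kumpera--Ruiz classification for the local normal forms, verify the brackets and the Reeb/symmetry data of Table~\ref{tab:Lie_systems} directly for classes 1--5, exclude class 6 via the symmetry-invariance of the flag $\mathcal{D}^k_{X_1,X_2}$ and the rank drop on $\{x_5=0\}$ (which forces any symmetry to be tangent there, so no complement to $\mathcal{D}$ can be spanned), and check the given symmetries of classes 7 and 8 on a dense subset. The only notable differences are cosmetic: the paper additionally explains how the class 7--8 symmetries arise from an ansatz $Y^A_\mu$ parametrized by a function $A(x_1,x_2,x_3)$, and you have attached $E$ to class 7 and $D$ to class 8 rather than the reverse.
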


\begin{proof} Showing that the Goursat distributions on $\mathbb{R}^4, \mathbb{R}^5$ and $\mathbb{R}^6$ are spanned by vector fields within VG Lie algebras while classes  one to five are related to $k$-contact distributions is immediate in terms of the data in Table \ref{tab:Lie_systems}. 

Let us show that class six is not related to a four-contact distribution. Note that $Y\in \mathfrak{X}(\mathbb{R}^6)$ is a Lie symmetry of $\mathcal{D}=\langle X_1,X_2\rangle$ with $X_1\wedge X_2\neq 0$ if and only if there exists $f\in C^\infty(\mathbb{R}^6)$  such that, with respect to the Schouten-Nijenhuis bracket \cite{Mar_97}, one has
$[Y,X_1\wedge X_2]=fX_1\wedge X_2.$

 The rank of $\mathcal{D}_{X_1,X_2}^4=\langle X_1,X_2,-X_3={\rm ad}_{X_2}(X_1),X_4={\rm ad}^2_{X_2}(X_1),-X_5={\rm ad}^3_{X_2}(X_1),X_6={\rm ad}^4_{X_2}(X_1)\rangle $ is five for $x_5=0$ and six on $x_5\neq 0$. Since the vector fields $X_1,X_3,X_4,X_5,X_6$ are invariant relative to $X_1$, it follows that $Y$ should leave the submanifold $x_5=0$ invariant \cite{LRS_24} and any four Lie symmetries should be tangent to the submanifold $x_5=0$, where ${\rm rk} \, \mathcal{D}_{X_1,X_2}^{4}=5$. Since $\mathcal{D}_{X_1,X_2}$ are tangent to that submanifold at $0$ too, Lie symmetries cannot span a supplementary to $\langle X_1,X_2\rangle$ at points with $x_5=0$, in particular, at $0$. Since every Goursat distribution of class six is locally diffeomorphic to a neighbourhood of zero in the representation in Table \ref{tab:Lie_systems}, it cannot be a four-contact distribution. 
 
To study classes 7 and 8,  long calculations performed with symbolic mathematical programs lead to the Lie symmetries $Y^A_7$ and $Y^A_8$, respectively, of the form
$$\textstyle
Y^A_\mu=  -\tfrac{\partial A}{\partial x_3}\partial_1 + \left(A - x_3 \tfrac{\partial A}{\partial x_3} \right)\partial_2+ \left(\tfrac{\partial A}{\partial x_1} + x_3 \tfrac{\partial A}{\partial x_2}\right)\partial_3+\sum_{\alpha=4}^6F^A_{\mu\alpha}\partial_\alpha,\quad \mu=7,8,
$$
parametrized by an arbitrary function $A=A(x_1,x_2,x_3)$ and where $F^A_{\mu 4},F^A_{\mu 5},F^A_{\mu 6}$ are univocally defined by $A$ and $\mu=7,8$. In particular, $A_{\mu i}=x_1^{4-i}(x_1x_3-3x_2)^{i-1}$ for $i=1,\ldots,4$  give a rise to a family of Reeb vector fields defined on a dense subset of $\mathbb{R}^6$ for classes 7,8  of the form $R_{\mu i}=Y_i+F_{\mu6}^i\partial_6$, where
\[
\begin{aligned}
Y_1 &= x_1^3\partial_2 + 3x_1^2\partial_3 + 6x_1\partial_4 - 6x_5^2\partial_5, \\[0.5em]
Y_2 &= -x_1^3\partial_1 - 3x_1^2 x_2\partial_2 - 6x_1 x_2\partial_3 
      - \left(6C - 3x_1^2 x_4\right)\partial_4 
      - 6x_5(x_1^2 - 2x_3 x_5)\partial_5, \\[0.5em]
Y_3 &= 2x_1^2 B\partial_1 
      + \left(9x_1 x_2^2 - x_1^3 x_3^2\right)\partial_2 
      + 3(3x_2 - x_1 x_3) C\partial_3 \\
    &\quad + \left(24x_2 x_3 - 6x_1 C x_4 + 2x_1^3 x_4^2\right)\partial_4 \\
    &\quad + 6x_5\left(x_1^2 x_3 - x_1^3 x_4 - (4x_3^2 + 3x_2 x_4)x_5 + 3x_1(x_2 + x_3 x_4 x_5)\right)\partial_5, \\[0.5em]
Y_4 &= -3x_1 B^2\partial_1 
      + (-3x_2 - 2x_1 x_3) B^2\partial_2 
      - 6x_3 B^2\partial_3 \\
    &\quad -3(3x_2 - x_1 x_3)\left(8x_3^2 - 9x_1 x_3 x_4 + x_4(3x_2 + 2x_1^2 x_4)\right)\partial_4 \\
    &\quad + 6x_5(2x_3 - x_1 x_4) \left(3x_1B + x_5(4x_3^2 - 7x_1 x_3 x_4 + x_4(9x_2 + x_1^2 x_4))\right)\partial_5,
\end{aligned}
\]
with $B=-3x_2+x_1x_3, \;C=x_2+x_1x_3$ and functions $F_{7 6}^1,\ldots,F_{76}^4,F_{8 6}^1\ldots,F_{86}^4$ are given in Table \ref{tab:Lie_systems}.
\end{proof}

\section{Applications to Lie systems}

There are many physical and mathematical problems related to Goursat distributions. Some of them are related to dynamical systems which can be, at least, locally described by the vector fields in Table \ref{tab:Lie_systems}. This relates them to Lie systems. Let us now focus on trailer systems and their relation to contact manifolds.

The \emph{$n$-trailer system} is the rank-two distribution defined on $\mathbb{R}^2\times (S^1)^{n+1}$ with local coordinates $\{\xi_{1},\xi_2,\theta_{0},\ldots,\theta_{n}\}$ given by \cite{PR_01} 
$$\textstyle
X_{1}=\partial_{\theta_{n}}, \quad X_{2}=\pi_{0}\cos(\theta_{0})\partial_{ \xi_{1}}+\pi_{0}\sin(\theta_{0})\partial_{ \xi_{2}}+\sum_{i=0}^{n-1}\pi_{i+1}\sin(\theta_{i+1}-\theta_{i})\partial_{ \theta_{i}},
$$ 
where $\pi_{i}=\prod_{j=i+1}^n\cos(\theta_{j}-\theta_{j-1})$ for $i=1,\ldots, n-1$ and $\pi_{n}=1$. Let us study the zero- and one-trailer systems. They are related to the analysis of certain dynamical systems \cite{NWS_01} and other mathematical problems.

The zero-trailer system is defined in $S^1\times \mathbb{R}^2$ and is associated with a unicycle-like mobile robot towing no trailers. Its dynamics is related to a $t$-dependent vector field $X=b_1(t)X_1+b_2(t)X_2$, where $X_1,X_2\in \mathfrak{X}(\mathbb{R}^2\times S^1)$ are included in a Lie algebra $V_0$ spanned by
$$
X_1=\partial_{ \theta_0},\quad  X_2=\cos(\theta_0)\partial_{ \xi_1}+\sin(\theta_0)\partial_{ \xi_2},\quad X_3=-\sin(\theta_0)\partial_{ \xi_1}+\cos(\theta_0)\partial_{ \xi_2}.
$$
Indeed, $[X_1,X_2]=X_3,[X_1,X_3]=-X_2$, and $[X_2,X_3]=0$. Then, $V_0$ is a three-dimensional Lie algebra isomorphic to $\mathfrak{iso}(2) = \mathfrak{so}(2) \subsetplus \mathbb{R}^{2}$, namely the Euclidean algebra in two dimensions.  Moreover, $X=\sum_{\alpha=1}^3b_\alpha(t)X_\alpha$ is a locally automorphic Lie system since $X_1\wedge X_2\wedge X_3$ is non-vanishing. Thus, the Lie algebra of vector fields commuting with the vector fields of $V_0$ is isomorphic to $\mathfrak{iso}(2)$ as well \cite{Gracia2019}. Note that $\langle X_1,X_2\rangle$ is a Goursat distribution on a manifold of dimension three. 

A computation shows that a basis of Lie symmetries of $V_0$ is given by 
$$
Y_{1} := - \xi_{2} \partial_{\xi_{1}} + \xi_{1} \partial_{\xi_{2}} + \partial_{\theta_{0}}, \qquad Y_2:=\partial_{ \xi_1}, \qquad Y_3:=\partial_{ \xi_2}.
$$




The dual frame to $\{Y_{1}, Y_{2}, Y_{3} \}$ is 
$ \eta_{1} = \dd \theta_{0},  \eta_{2} = \dd \xi_{1} + \xi_{2} \dd \theta_{0},  \eta_{3} = \dd \xi_{2} - \xi_{1} \dd \theta_{0}.
$
Then, $\dd \eta_{3} = \eta_{1} \wedge \eta_{2}$ and $\eta_{3} \wedge \dd \eta_{3} \neq 0$. In an alternative manner, one can see that the distribution $\ker \eta_3=\langle Y_1,Y_2\rangle$ is maximally non-integrable because $[Y_1,Y_2]\wedge Y_1\wedge Y_2\neq 0$ and $X_1,X_2, X_3$ are  Lie symmetries of $\ker \eta_3$. Moreover, $Y_3$ is a Lie symmetry of $X$ for any $b_1(t),b_2(t),b_3(t)$. Hence, $\eta_{3}$ is a contact form whose associated Reeb vector field is $Y_{3}$. Then, $X$ is invariant relative to $Y_3$ and $X$ becomes a so-called {\it conservative contact Lie system }\cite{LR_23}. One can see that $\eta_2$ also satisfies that $\dd\eta_2=-\eta_1\wedge \eta_3$ and $\dd\eta_2\wedge \eta_2\neq 0$. Since $\eta_2$ is invariant relative to the vector fields $X_1,X_2,X_3$, i.e. $\mathcal{L}_{X_{i}}\eta_2=0$ for $i=1,2,3$, it follows that $X$ is also a conservative contact Lie system. It is indeed invariant relative to vector field $Y_2$, the Reeb vector field of $\eta_2$. One still can see that $X$ is invariant relative to the contact form $\eta_1+\eta_2$ and $X$ is invariant relative to $(Y_3+Y_2)/2$. 

The above shows that $X$ is invariant relative to the Lie group action $\phi:\mathrm{ISO}(2)\times \mathbb{R}^2\times S^1\rightarrow \mathbb{R}^2\times S^1$ obtained by integrating $Y_1,Y_2,Y_3$, namely 
\begin{equation}
\phi\left(\begin{bsmallmatrix}
 A_\theta &\vec{\lambda}\\
0&1\\
\end{bsmallmatrix},\begin{bsmallmatrix}
\vec{\xi}\\
\theta_0,
\end{bsmallmatrix}\right)=\begin{bsmallmatrix}
\vec{\lambda}+A_{\theta}\vec{\xi}\\
\theta_0+\theta
\end{bsmallmatrix},\qquad A_\theta=\begin{bsmallmatrix}
\cos\theta&-\sin\theta\\
\sin\theta&\cos\theta
\end{bsmallmatrix},
\end{equation}
and the general solution to $X$ can be written as
$
x(t)=\phi((A_\theta,{\bf \vec{\xi}}),x_p(t))$, for $\vec{\xi}\in \mathbb{R}^2$, and $\theta\in [0,2\pi[,
$
for a particular solution $x_p(t)$ of $X$. 
This shows how contact geometry can be used to obtain superposition rules for contact Lie systems whose Reeb vector fields are Lie symmetries of the system. 

It is worth noting that one-trailer system is also a Lie system associated with a Goursat distribution on $\mathbb{R}^2 \times (S^1)^2$, which admits a six-dimensional Vessiot–Guldberg (VG) Lie algebra. By means of the classification of Goursat distributions in low-dimensional cases presented in Table \ref{tab:Lie_systems}, one can analyse the geometric structure and integrability properties of such system through $k$-contact geometry. It is noteworthy that several systems of interest in control theory are Lie systems related to Goursat distributions. For instance, the kinematic model of a car with both front and rear wheels, as well as the Martinet sphere, are known Lie systems \cite{Ram_02} that correspond to Goursat structures on $\mathbb{R}^4$. The front-wheel driven car pulling a trailer, as analysed in \cite{Ram_02}, leads to a Goursat distribution on $\mathbb{R}^5$, and falls under the classification framework introduced in Table \ref{tab:Lie_systems} of this work. 
Furthermore, systems presented in chained forms are naturally associated with Goursat distributions on $\mathbb{R}^n$. The presented approach demonstrates that these systems, at least in lower dimensions, can be systematically studied using the tools of $k$-contact geometry, which extend the classical contact structure paradigm.

\section{Cartan geometry and $k$-contact distributions}
Let us inspect examples of $k$-contact distributions motivated by Cartan geometry.
Let us start with a distribution  on $\mathbb{R}^6$ with coordinates $\{x_1,x_2,x_3,u_1,u_2,u_3\}$ given by
\[\mathcal{D}=\langle X_1:=\partial_{x_3}-x_2\partial_{u_1},\;X_2:=\partial_{x_1}-x_3\partial_{u_2},\;X_3:=\partial_{x_2}-x_1\partial_{u_3}\rangle.
\]
It is said to be 
a $(3,6)$ distribution since it has rank three and the vector fields taking values in it along with their Lie brackets span a distribution of rank six.  Noteworthy, $\mathcal{{D}}$ defines a flat parabolic (see \cite{CS09} for the definition of parabolic geometries) Cartan geometry of type $(\mathrm{SO}(3,4),P_1)$ and so it admits a Lie algebra of infinitesimal symmetries isomorphic to  $\mathfrak{so}(3,4)$. Three commuting Lie symmetries of $\mathcal{D}$ are given by $$
S_1=[X_1,X_3]=\partial_{u_1},\qquad  S_2=[X_2,X_1]=\partial_{u_2},\qquad S_3=[X_3,X_2]=\partial_{u_3}
$$
and $\mathcal{D}\oplus\langle S_1,S_2,S_3\rangle=T\mathbb{R}^6$. This makes $\mathcal{D}$ a three-contact structure.

Let us consider the classical \cite{Car10} example of a generic rank two distribution (so-called $(2,3,5)$ distribution) on $\mathbb{R}^5$ (a parabolic Cartan geometry of type $(\mathrm{G}_2,P_1)$) with local coordinates $\{x,y,z,p,q\}$ given by (see \cite{Ran_16} for details)
$$\mathcal{D}=\langle \partial_q,\partial_x+p\partial_y+q\partial_p+F(q)\partial_z\rangle,$$
for a function $F\in C^\infty(\mathbb{R}^5)$ satisfying $\partial_q^2F\neq 0$. Since the Lie brackets in $\mathcal{D}$ with themselves span a distribution of rank three, $\mathcal{D}$ is maximally non-integrable. Moreover, it admits three commuting Lie symmetries $S_1=\partial_x,S_2=\partial_y,S_3=\partial_z$ and $\mathcal{D}\;\oplus\;\langle S_1,S_2,S_3\rangle=T\mathbb{R}^5$, so $\mathcal{D}$ is a three-contact distribution.  

Finally,  let us study a $(4,7)$ distribution belonging to the so-called \emph{quaternionic contact} (qc) \emph{structures} in dimension $7$ \cite{MS18,Nur25}. In \cite{MS18}, it is proved that 7-dimensional qc structures are Cartan geometries of type $(\mathrm{Sp}(4,1),P)$, where $P$ is the parabolic subgroup stabilizing a certain complex 2-plane in $\mathrm{Sp}(4,1)$. Consider coordinates $\{u_1,u_2,u_3,x_1,x_2,x_3,x_4\}$ on $\mathbb{R}^7$. Following \cite{Nur25}, one may define an $\mathfrak{sp}(4,1)$-symmetric (Cartan-flat) qc structure by a rank 4 distribution 
        $$\mathcal{D}=\langle x_2\partial_{u_1}-x_1\partial_{u_2}+x_3\partial_{u_3}+\partial_{x_4},x_1\partial_{u_1}-x_2\partial_{u_2}-\partial_{x_3}, x_1\partial_{u_3}+\partial_{x_2}, \partial_{x_1}\rangle.$$
        Then, $S_1=\partial_{u_1},S_2=\partial_{u_2},S_3=\partial_{u_3}$ are  commuting Lie symmetries of $\mathcal{D}$ and $\mathcal{D}\oplus\langle S_1,S_2,S_3\rangle=T\mathbb{R}^7$. Thus, flat qc structures define three-contact structures.


\begin{credits}
\subsubsection{\ackname} Fruitful discussions with O. Carballal, X. Rivas and J. de Lucas are acknowledged. Specially, we thank you very much Ian Anderson for the two talks given for the Gamma Seminar of the University of Warsaw and additional information, which pointed us how the software package \verb|DifferentialGeometry| could be used to complete the Reeb vector fields of classes 7 and 8 in Table \ref{tab:Lie_systems}.  We thank the referees for useful comments that improved the quality of our work.

\subsubsection{\discintname}
The authors have no competing interests to declare that are relevant to the content of this article.
\end{credits}
%
%
%

\bibliographystyle{splncs04}
\bibliography{references}

\end{document}